\documentclass{amsart}
\usepackage{amsfonts,amssymb,amsmath,amsthm}
\usepackage{url}
\usepackage{enumerate}

\urlstyle{sf}

\newtheorem{thm}{Theorem}[section]

\newtheorem{defn}{Definition}[section]
\newtheorem{prop}{Proposition}[section]

\newtheorem{lem}{Lemma}[section]

\newtheorem{exmpl}{Example}[subsection]

\numberwithin{equation}{section}

\author{RB Yadav}
\address{
Department of Mathematics\\
  IIT Guwahati\\
 Assam 781039, INDIA}
\email{rbyadav15@gmail.com}
\author{Srikanth KV}
\address{
 Department of Mathematics\\
  IIT Guwahati\\
 Assam 781039, INDIA}
\email{kvsrikanth@iitg.ernet.in}

\keywords{Circle; Metric; PL function; Polynomials; Density; Stone-Weierstrass.}
\subjclass{Primary 54C35, Secondary 54C10}
\begin{document}

\title[ON STUDY OF A METRIC  ON $C(S^1,S^1)$ ]{ON STUDY OF A METRIC  ON $C(S^1,S^1)$ }

\begin{abstract}
In this article we  define a metric  on $C(S^1,S^1)$. Also, we give some density results in $C(S^1,S^1)$.
\end{abstract}
\maketitle
\section{Introduction}
M. H. Stone gave the Stone-Weierstrass Theorem \cite{3}, which is a density result
with respect to uniform topology. This result is a generalization of the Weierstrass
Approximation Theorem by lightening the restrictions imposed on the domain over
which the given functions are defined. By taking the co-domain of the given functions
to be the complex plane in place of real line, he went further and gave a complex
version of the latter result as Stone-Weierstrass Theorem \cite{4}. In this paper our aim is to
seek density results for $C(S^1,S^1)$, the class of continuous functions from the unit circle $S^1$, given by $S^1=\{(x,y)\in \mathbb{R}^2 : x^2+y^2=1\}$, to itself.

\section{Preliminaries}
In this section we recall some basic notions.
Let $X$ be a compact houdorff space.  Define

$$C(X,\mathbb{R}) = \{f:X\to \mathbb{R}: \text{f is continuous}\}.$$
\begin{defn}
\begin{enumerate}
  \item  An algebra is a bimodule together with a bilinear product.
  \item If $\mathcal{A}$ and $\mathcal{A}_1$ are algebras such that $\mathcal{A}_1\subset \mathcal{A}$ and module and product structures on $\mathbb{A}_1$ are the induced  ones  from $\mathcal{A}.$
\end{enumerate}

\end{defn}
\begin{exmpl}
  \begin{enumerate}
    \item $\mathbb{R}$ and $\mathbb{C}$ are algebras.
    \item $C(X,\mathbb{R})$ and $C(X,\mathbb{C})$ are algebras.
    \item Set of polynomials is subalgebra of $C([a,b],\mathbb{R})$.
  \end{enumerate}
\end{exmpl}
  \begin{defn}
    \begin{enumerate}
      \item A subalgebra $\mathcal{A}\subset C(X,\mathbb{R})$ is said to be unital if $1\in \mathcal{A}.$
      \item  $\mathcal{A}\subset C(X,\mathbb{R})$ is said to separate points of $X$ if $\forall a,b\in X$ with $a\ne b$ there exists $f\in \mathcal{A}$ such that $f(a)\ne f(b).$
    \end{enumerate}
  \end{defn}
\begin{thm}[Stone-Weierstrass theorem]\label{thm:swcreal}
 If $\mathcal{A}$ is a unital sub--algebra of $C(X,\mathbb{R})$
which separates points then $\mathcal{A}$ is dense in $C(X,\mathbb{R})$ in the topology induced by $\sup$ metric.
\end{thm}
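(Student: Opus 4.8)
The plan is to work throughout with the uniform closure $\overline{\mathcal{A}}$ and to show that it exhausts $C(X,\mathbb{R})$. First I would verify that $\overline{\mathcal{A}}$ is again a unital subalgebra which separates points: since addition, scalar multiplication, and pointwise multiplication are all continuous with respect to the $\sup$ metric, uniform limits of elements of $\mathcal{A}$ remain closed under the algebra operations, and the unital and separating properties pass immediately to the larger set $\overline{\mathcal{A}} \supseteq \mathcal{A}$. Hence it suffices to prove that a uniformly closed, unital, separating subalgebra must be all of $C(X,\mathbb{R})$.

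The analytic core, and the step I expect to be the main obstacle, is showing that $\overline{\mathcal{A}}$ is closed under absolute value: that $f \in \overline{\mathcal{A}}$ forces $|f| \in \overline{\mathcal{A}}$. For this I would fix $M$ with $\sup_X |f| \le M$ and uniformly approximate the square-root function on $[0,1]$ by the polynomial recursion $p_{n+1}(s) = p_n(s) + \tfrac12\bigl(s - p_n(s)^2\bigr)$ with $p_0 = 0$; one checks by induction that $0 \le p_n(s) \uparrow \sqrt{s}$ pointwise on $[0,1]$, so the convergence is in fact uniform by Dini's theorem. Substituting $s = f^2/M^2$ and multiplying by $M$ produces polynomials in $f$ — which lie in $\mathcal{A}$ precisely because $\mathcal{A}$ is unital, so that constant terms and all powers $f^k$ are available — converging uniformly to $\sqrt{f^2} = |f|$. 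Thus $|f| \in \overline{\mathcal{A}}$.

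Once absolute values are available, the identities $\max(f,g) = \tfrac12(f+g) + \tfrac12|f-g|$ and $\min(f,g) = \tfrac12(f+g) - \tfrac12|f-g|$ show that $\overline{\mathcal{A}}$ is a \emph{sublattice} of $C(X,\mathbb{R})$, closed under finite maxima and minima. The remaining ingredient is two-point interpolation: given distinct $x,y \in X$ and reals $\alpha,\beta$, the separating hypothesis supplies $h \in \mathcal{A}$ with $h(x) \ne h(y)$, and then $g = \alpha + (\beta-\alpha)\dfrac{h - h(x)}{h(y)-h(x)}$ lies in $\mathcal{A}$ (again using that the algebra is unital) and satisfies $g(x)=\alpha$, $g(y)=\beta$.

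Finally I would carry out the standard lattice approximation. Fix $f \in C(X,\mathbb{R})$ and $\varepsilon>0$, and for each pair $x,y$ choose $g_{x,y} \in \overline{\mathcal{A}}$ agreeing with $f$ at both points. Holding $x$ fixed, the sets $\{z : g_{x,y}(z) < f(z)+\varepsilon\}$ are open and cover $X$; extracting a finite subcover by compactness and taking the corresponding finite minimum yields $g_x \in \overline{\mathcal{A}}$ with $g_x(x) = f(x)$ and $g_x < f + \varepsilon$ everywhere. The sets $\{z : g_x(z) > f(z)-\varepsilon\}$ are open, contain $x$, and cover $X$; a finite maximum over a finite subcover produces $g \in \overline{\mathcal{A}}$ with $f - \varepsilon < g < f + \varepsilon$, so $\sup_X |g-f| \le \varepsilon$. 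Since $\varepsilon$ was arbitrary, $f \in \overline{\mathcal{A}}$, whence $\overline{\mathcal{A}} = C(X,\mathbb{R})$ and $\mathcal{A}$ is dense, as claimed.
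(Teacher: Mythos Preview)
Your argument is the standard and correct proof of the Stone--Weierstrass theorem: pass to the uniform closure, show it is closed under $|\cdot|$ via polynomial approximation of $\sqrt{\,\cdot\,}$, deduce the lattice property, interpolate at two points, and finish with the compactness/min--max argument. I see no gaps.

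However, there is nothing to compare against: in the paper this theorem appears in the Preliminaries section as a quoted background result, attributed to Stone's original paper, and no proof is given. The authors simply invoke it later (e.g.\ in the proof of Lemma~4.1). So your write-up supplies a complete proof where the paper deliberately offers none; it is neither the same as nor different from the paper's approach, because the paper has no approach here.
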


Next  we recall some basic notions related to  covering spaces.\\
Let $E$ and $X$ be topological spaces, and let $q:E\to X$ be a continuous map.
\begin{defn}
An open set $U\subset X$ is said to be \textbf{evenly covered by q} if $q^{-1} (U)$ is a disjoint union of connected open subsets of $E$ (called the \textbf{ sheets of the covering over $U$}), each of which is mapped homeomorphically onto $U$ by $q$.
\end{defn}
\begin{defn}
A \textbf{covering map or projection map} is a continuous surjective map $q:E\to X$  such that $E$ is connected and locally path-connected, and every point of $X$ has an evenly covered neighborhood. If $q:E\to X$ is a covering map, we call $E$ a \textbf{covering space of $X$}  and $X$ the \textbf{base of the covering}.
\end{defn}
\begin{exmpl}\label{exp}
The exponential quotient map $p:\mathbb{R}\to S^1$ given by $p(x)=\exp (2\pi \imath x)$ is a covering map.
\end{exmpl}
\begin{exmpl}
The $n^\mathrm{th}$ power map $p_n:S^1\to S^1$  given by  $p_n(z)=z^n$ is also a covering map.
\end{exmpl}
\begin{exmpl}
Let $\mathbb{T}^n=\underbrace{S^1\times S^1\times ...\times S^1}_\text{n times}$. Define $p^n:\mathbb{R}^n\to \mathbb{T}^n$ by
$$p^n(x_1,....,x_n)=(p(x_1),...,p(x_n)),$$
where $p$ is exponential map of Example \ref{exp}. It can be verified that $p^n$ is a covering map.
\end{exmpl}
\begin{defn}
If $q:E\to X$ is a covering map and $\phi :Y\to X$ is any continuous map, a \textbf{lift of $\phi$ } is a continuous map $\tilde{\phi}:Y\to E$ such that $q\circ \tilde{\phi}=\phi$.
\end{defn}
From \cite{2} we have following result called as \textbf{Path Lifting Property}:
\begin{lem}\label{raj100}
Let $q:E\to X$ be a covering map. Suppose $f:[0,1]\to X$ is any path, and $e\in E$ is any point in the fiber of $q$ over $f(0)$. Then there exists a unique  lift $\tilde{f}:I\to E$ of $f$ such that $\tilde{f}(0)=e$.
\end{lem}
\section{A metric on $C(S^1,S^1)$}\label{DS^1}
Let $C(S^1,S^1)$ denote the collection of all continuous functions from $S^1$ to $S^1$. Further let $I=[0,1]$ and
\begin{footnotesize}$\widehat{C}(I,\mathbb{R})=\{f|f:I\to \mathbb{R}$ is continuous  with  $f(0)\in [0,1)$\;and\; $f(1)-f(0)\in\mathbb{Z}\}$.
\end{footnotesize}
Consider the function $\alpha :[0,1]\rightarrow S^1$ given by $\alpha(t)=(\cos 2\pi t,\sin 2\pi t)$ and the covering space $\mathbb{R}$ of $S^1$ with projection map $p$ given by $p(t)=(\cos 2\pi t,\sin 2\pi t)$.\\
For $f\in C(S^1,S^1)$, let $\tilde{f_{\alpha}}$ be the unique lift \ref{raj100} of $f\circ\alpha$ such that $\tilde{f_{\alpha}}(0)\in [0,1)$. Clearly $\tilde{f_{\alpha}}\in \widehat{C}(I,\mathbb{R})$.\\
For $f\in C(S^1,S^1)$ and $g \in C(S^1,S^1)$, define
\begin{center}
 $d_0(f,g)= 2\pi \sup_{\substack{x\in S^1\\x\ne(1,0)}}|\tilde{f_{\alpha}}(\alpha^{-1}(x))-\tilde{g_{\alpha}}(\alpha^{-1}(x))|$\\
\end{center}
\noindent Let  $\widehat{C}(I,\mathbb{R})$ be equipped with the metric $d_1$ induced by $\sup$  norm on it.
\begin{prop}$d_0$ is a metric  on $C(S^1,S^1)$ and there exists a homeomorphism $\phi:C(S^1,S^1)\rightarrow \widehat{C}(I,\mathbb{R})$ such that $d_0(f,g)=2\pi d_1(\phi(f),\phi(g))$, for all f and g in $C(S^1,S^1)$ .
 \end{prop}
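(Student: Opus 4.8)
The plan is to exhibit the natural candidate $\phi(f)=\tilde{f_{\alpha}}$ and to show it is a bijection that rescales $d_1$ to $d_0$; once this is done, every metric axiom for $d_0$ and the homeomorphism property will follow formally from the corresponding facts about the sup metric $d_1$. First I would check that $\phi$ is well defined, i.e. that $\tilde{f_{\alpha}}$ really lies in $\widehat{C}(I,\mathbb{R})$. Continuity and $\tilde{f_{\alpha}}(0)\in[0,1)$ hold by the very definition of the lift, while the condition $\tilde{f_{\alpha}}(1)-\tilde{f_{\alpha}}(0)\in\mathbb{Z}$ follows because $p\circ\tilde{f_{\alpha}}=f\circ\alpha$ together with $\alpha(0)=\alpha(1)=(1,0)$ gives $p(\tilde{f_{\alpha}}(1))=f((1,0))=p(\tilde{f_{\alpha}}(0))$, and two reals have the same image under $p$ exactly when they differ by an integer.

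Next I would establish the metric identity. Since $\alpha$ restricts to a bijection from $(0,1)$ onto $S^1\setminus\{(1,0)\}$, letting $x$ run over $S^1\setminus\{(1,0)\}$ is the same as letting $t=\alpha^{-1}(x)$ run over $(0,1)$, so
\[
 d_0(f,g)=2\pi\sup_{t\in(0,1)}\bigl|\tilde{f_{\alpha}}(t)-\tilde{g_{\alpha}}(t)\bigr|.
\]
Because $\tilde{f_{\alpha}}-\tilde{g_{\alpha}}$ is continuous on the compact interval $[0,1]$, the supremum over the open interval equals the supremum over all of $[0,1]$, which is precisely $d_1(\phi(f),\phi(g))$. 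This yields the stated relation $d_0(f,g)=2\pi\,d_1(\phi(f),\phi(g))$ and simultaneously shows the supremum is finite.

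The central step is the bijectivity of $\phi$. Injectivity is easy: if $\tilde{f_{\alpha}}=\tilde{g_{\alpha}}$ then $f\circ\alpha=p\circ\tilde{f_{\alpha}}=p\circ\tilde{g_{\alpha}}=g\circ\alpha$, and the surjectivity of $\alpha$ forces $f=g$. Surjectivity is the main obstacle, and is where the structural hypotheses on $\widehat{C}(I,\mathbb{R})$ are used. Given $h\in\widehat{C}(I,\mathbb{R})$, I would form the continuous path $p\circ h:[0,1]\to S^1$; the integrality condition $h(1)-h(0)\in\mathbb{Z}$ guarantees $(p\circ h)(0)=(p\circ h)(1)$, so $p\circ h$ is constant on the fibres of $\alpha$. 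Since $\alpha$ is a continuous surjection from a compact space onto a Hausdorff space, hence a quotient map, this produces a unique continuous $f:S^1\to S^1$ with $f\circ\alpha=p\circ h$. Then $h$ is a lift of $f\circ\alpha$ with $h(0)\in[0,1)$, so by the uniqueness in the Path Lifting Property (Lemma \ref{raj100}) we obtain $\tilde{f_{\alpha}}=h$, that is $\phi(f)=h$. Verifying the continuity of $f$ through the quotient property is the delicate point; everything else is bookkeeping.

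Finally I would transfer the conclusions back through $\phi$. Because $d_1$ is a genuine metric and $\phi$ is a bijection with $d_0=2\pi\,d_1\circ(\phi\times\phi)$, nonnegativity, symmetry, the triangle inequality, and the implication $d_0(f,g)=0\Rightarrow f=g$ (using injectivity) all carry over, so $d_0$ is a metric on $C(S^1,S^1)$. The same scaling identity shows that $\phi$ and $\phi^{-1}$ distort distances by the constant factor $2\pi$, whence both are continuous and $\phi$ is the desired homeomorphism.
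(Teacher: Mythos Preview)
Your proposal is correct and follows essentially the same approach as the paper: define $\phi(f)=\tilde{f_{\alpha}}$, verify bijectivity, establish the scaling identity $d_0=2\pi\,d_1\circ(\phi\times\phi)$, and read off both the metric axioms and the homeomorphism property. The only cosmetic difference is that for surjectivity you invoke the universal property of the quotient map $\alpha$, whereas the paper writes the preimage down explicitly by the piecewise formula $f(x)=p\circ g\circ\alpha^{-1}(x)$ for $x\neq(1,0)$ and $f((1,0))=p(g(0))$.
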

 \begin{proof}
 Define $\phi:C(S^1,S^1)\rightarrow \bar{C}(I,\mathbb{R})$  by
 $\phi(f)(t)=\tilde{f_{\alpha}}(t), \forall t\in I$\\
 Now, $\forall t\in I$, $\phi(f)=\phi(g)\Rightarrow\tilde{f_{\alpha}}(t)= \tilde{g_{\alpha}}(t) \Rightarrow p\circ\tilde{f_{\alpha}}(t)= p\circ\tilde{g_{\alpha}}(t)\Rightarrow f\circ\alpha(t)= g\circ\alpha(t)$. Again, $\forall t\in I, f\circ\alpha(t)= g\circ\alpha(t)\Rightarrow f(x)= g(x)$,  $\forall x\in S^1 \Rightarrow f= g$.
So $\phi$ is injective.\\
Given any $g\in \widehat{C}(I,\mathbb{R})$, define $f$ by \\
\begin{equation*}
f(x)=\left\{
\begin{array}{rl}
p\circ g\circ \alpha^{-1}(x) & \text{if  $x\in S^1-\{(1,0)\}$}\\
p\circ g(0)=p\circ g(1) & \text{otherwise}
\end{array}\right.
\end{equation*}
Clearly $f\in C(S^1,S^1)$ and $\phi(f)=g$. So $\phi$ is onto.
\begin{align*}
d_0(f,g)&= 2\pi \sup_{\substack{x\in S^1\\x\ne(1,0)}}|\tilde{f_{\alpha}}(\alpha^{-1}(x))-\tilde{g_{\alpha}}(\alpha^{-1}(x))|\\
        &= 2\pi \sup_{\substack{x\in S^1\\x\ne(1,0)}}|\phi(f)(\alpha^{-1}(x))-\phi(g)(\alpha^{-1}(x))|\\
         &= 2\pi \sup_{t\in(0,1)}|\phi(f)(t)-\phi(g)(t)|\\
         &= 2\pi \sup_{t\in[0,1]}|\phi(f)(t)-\phi(g)(t)|\\
         &=2\pi d_1(\phi(f),\phi(g))
\end{align*}
So $d_0$ is a metric space and $\phi$ is a homeomorphism.
\end{proof}
\subsection{Density results  using  PL functions  and  polynomials}
 Recall that a function $f:[a,b]\rightarrow \mathbb{R}$ is said to be  piecewise linear (PL) if there exist sets of points $\{p_i\}_{i=1}^k$ in $[a,b]$ and $\{q_i\}_{i=1}^k$ in $\mathbb{R}$ such that $p_1=a$, $p_k=b$ and $\forall t\in [p_i,p_{i+1}]$, $f(t)= q_i +(q_{i+1}-q_i)\frac{(t-p_i)}{p_{i+1}-p_i}$,  for every  $1\le i\le k-1$.\\
For $a,b\in \mathbb{R}$ define

$${PL}_{a,b}(I,\mathbb{R})=\{f|f:I\rightarrow \mathbb{R} \;\mathrm{ is\; PL \; and}\; f(0)=a,\; f(1)=b\}$$
 $$C_{a,b}(I,\mathbb{R})=\{f\in C(I,\mathbb{R})|f(0)=a,\; f(1)=b\}$$
 $$P_{a,b}(I,\mathbb{R})=\{p(x)|p(x)\; \mathrm{is\; a\; polynomial\; on}\; \mathbb{R}\;\mathrm{ with}\; p(0)=a,\;  p(1)=b\}$$
Note that ${PL}_{a,b}(I,\mathbb{R})$ is dense in $C_{a,b}(I,\mathbb{R})$.
\begin{lem}
Given $a\in \mathbb{R}$, the  set $P_{0,a}(I,\mathbb{R})$ is dense in $C_{0,a}(I,\mathbb{R})$ with $\sup$ metric.
\end{lem}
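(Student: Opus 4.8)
The plan is to begin from the classical Weierstrass approximation theorem and then repair the endpoint values by subtracting a small affine correction term. Given $f\in C_{0,a}(I,\mathbb{R})$ and $\varepsilon>0$, I would first invoke Weierstrass to produce a polynomial $q$ with $\sup_{t\in I}|f(t)-q(t)|<\varepsilon/2$. This $q$ need not belong to $P_{0,a}(I,\mathbb{R})$, since $q(0)$ and $q(1)$ may differ from the prescribed values $0$ and $a$. However, because $f(0)=0$ and $f(1)=a$, the uniform bound immediately forces $|q(0)|<\varepsilon/2$ and $|q(1)-a|<\varepsilon/2$, so the boundary values of $q$ are already nearly correct.

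The key step is then to define the corrected polynomial
\[
p(t)=q(t)-(1-t)\,q(0)-t\,(q(1)-a).
\]
A direct evaluation gives $p(0)=q(0)-q(0)=0$ and $p(1)=q(1)-(q(1)-a)=a$, so $p\in P_{0,a}(I,\mathbb{R})$, and $p$ is visibly a polynomial since it is $q$ plus a linear function. The correction is a convex-type combination of the two small endpoint errors, so for every $t\in[0,1]$ one has $|p(t)-q(t)|=|(1-t)q(0)+t(q(1)-a)|\le\max\{|q(0)|,\,|q(1)-a|\}<\varepsilon/2$.

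Finally, combining the two estimates through the triangle inequality yields $\sup_{t\in I}|f(t)-p(t)|\le\sup_{t\in I}|f(t)-q(t)|+\sup_{t\in I}|q(t)-p(t)|<\varepsilon$, which is exactly the asserted density. I do not expect a genuine obstacle: the entire argument is Weierstrass together with a one-line affine adjustment. The only point requiring (mild) care is verifying that the correction term is itself a polynomial vanishing correctly at the endpoints while remaining uniformly small, and the bound $\max\{|q(0)|,\,|q(1)-a|\}$ disposes of this cleanly.
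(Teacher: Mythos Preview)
Your proof is correct and is essentially the same as the paper's: both apply Weierstrass and then subtract an affine correction to force the endpoint values, and in fact your corrected polynomial $p(t)=q(t)-(1-t)q(0)-t(q(1)-a)$ expands algebraically to the paper's $\tilde p(t)=q(t)-q(0)+t\bigl(f(1)-q(1)+q(0)\bigr)$. The only cosmetic difference is that you phrase the argument with a single $\varepsilon$ while the paper uses a sequence $p_n\to f$.
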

\begin{proof}
Let $f\in C_{0,a}(I,\mathbb{R})$. By Weierstrass approximation theorem  a sequence of polynomials $\{p_n\}$ such that $p_n\to f$ uniformly. Now define $\tilde{p}_n$ by\\ $\tilde{p}_n(x)= p_n(x)-p_n(0)+ x(f(1)-p_n(1)+p_n(0))$. \\
Clearly $\tilde{p}_n\to f$ uniformly and $\tilde{p}_n(0)=0$, $\tilde{p}_n(1)=a$.
\end{proof}
\begin{thm}\label{marker} Given $a,b\in \mathbb{R}$, the set $P_{a,b}(I,\mathbb{R})$ is dense in $C_{a,b}(I,\mathbb{R})$ with $\sup$ metric.
\end{thm}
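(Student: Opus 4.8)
The plan is to reduce the general case to the already-proved special case where the left endpoint value is $0$. The preceding lemma establishes that $P_{0,c}(I,\mathbb{R})$ is dense in $C_{0,c}(I,\mathbb{R})$ for any real $c$, so the natural strategy is to construct a bijection between $C_{a,b}(I,\mathbb{R})$ and $C_{0,b-a}(I,\mathbb{R})$ that carries polynomials to polynomials and is an isometry for the sup metric. The obvious candidate is the shift map $f \mapsto f - a$, where $a$ denotes the constant function with value $a$.

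First I would note that for $f \in C_{a,b}(I,\mathbb{R})$ the function $g = f - a$ satisfies $g(0) = f(0) - a = 0$ and $g(1) = f(1) - a = b - a$, so $g \in C_{0,b-a}(I,\mathbb{R})$; conversely adding the constant $a$ sends $C_{0,b-a}(I,\mathbb{R})$ back into $C_{a,b}(I,\mathbb{R})$. Second, I would observe that this shift is a sup-metric isometry, since $\sup_{x}|(f_1 - a) - (f_2 - a)| = \sup_{x}|f_1 - f_2|$, and it clearly sends polynomials to polynomials (subtracting a constant preserves polynomiality and the constraints), giving a bijection $P_{a,b}(I,\mathbb{R}) \leftrightarrow P_{0,b-a}(I,\mathbb{R})$. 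Finally, I would invoke the lemma: given $f \in C_{a,b}(I,\mathbb{R})$ and $\varepsilon > 0$, apply density of $P_{0,b-a}$ in $C_{0,b-a}$ to $f - a$ to obtain a polynomial $q$ with $\sup_x |q - (f-a)| < \varepsilon$, and then $q + a$ is a polynomial in $P_{a,b}(I,\mathbb{R})$ with $\sup_x |(q+a) - f| < \varepsilon$, which closes the argument.

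There is really no hard obstacle here; the only point requiring a moment of care is checking that the constraints transform correctly (that $b - a$, not $b$, is the relevant right-endpoint value after shifting) and that the constant $a$ added to a polynomial is still a polynomial meeting the endpoint conditions of $P_{a,b}$. An alternative that avoids even invoking the lemma would be to mimic its proof directly: starting from a Weierstrass approximant $p_n \to f$ uniformly, set $\tilde{p}_n(x) = p_n(x) - p_n(0) + a + x\bigl(f(1) - p_n(1) + p_n(0) - a\bigr)$, which forces $\tilde p_n(0) = a$ and $\tilde p_n(1) = b$ while preserving uniform convergence; but the reduction via the shift map is cleaner and reuses the lemma as intended.
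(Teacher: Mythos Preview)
Your proposal is correct and follows essentially the same approach as the paper: the paper defines the shift map $\psi(f)=f-a$ from $C_{a,b}(I,\mathbb{R})$ to $C_{0,b-a}(I,\mathbb{R})$, observes it is a homeomorphism carrying $P_{a,b}$ onto $P_{0,b-a}$, and invokes the preceding lemma. Your version supplies more detail (isometry, explicit $\varepsilon$-argument) and an alternative direct construction, but the core idea is identical.
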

\begin{proof}
Define $\psi:C_{a,b}(I,\mathbb{R})\to C_{0,b-a}(I,\mathbb{R})$ via
$\psi(f)(x)=f(x)-a$\\
Clearly $\psi $ is a homeomorphism and $\psi(P_{a,b}(I,\mathbb{R}))=P_{0,b-a}(I,\mathbb{R})$. So, since $P_{0,b-a}(I,\mathbb{R})$ is dense in $C_{0,b-a}(I,\mathbb{R})$, $P_{a,b}(I,\mathbb{R})$ is dense in $C_{a,b}(I,\mathbb{R})$.
\end{proof}
\begin{defn}
Let $f:S^1\to S^1$ be a continuous function. Then winding number $W(f)$ of $f$ is the integer $\tilde{f_{\alpha}}(1)-\tilde{f_{\alpha}}(0)$.
\end{defn}
 For $q\in S^1$ and $m\in\mathbb{Z}$ define
\begin{footnotesize}
$$C_m^q(S^1,S^1)=\{f\in C(S^1,S^1)|f((1,0))=q \;\mathrm{and}\;  W(f)=m \}$$
$$ PL_m^q(S^1,S^1)=\{ f\in C(S^1,S^1)|\tilde{f_{\alpha}}\in PL_{\tilde{q},\tilde{q}+m}(I,\mathbb{R}),\;p\circ\tilde{f_{\alpha}}(0)=q\}$$
$$P_m^q(S^1,S^1)=\{ f\in C(S^1,S^1)|\tilde{f_{\alpha}}\in P_{\tilde{q},\tilde{q}+m}(I,\mathbb{R}),\;p\circ\tilde{f_{\alpha}}(0)=q\}.$$
\end{footnotesize}
\begin{thm}\label{rb}
 The set  $ PL_m^q(S^1,S^1)$ is dense in $C_m^q(S^1,S^1)$.
\end{thm}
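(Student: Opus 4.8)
The plan is to transport the problem to the real line via the homeomorphism $\phi$ constructed in the preceding proposition, where the required density has already been recorded. Recall that $\phi(f)=\tilde{f_{\alpha}}$ and that $d_0(f,g)=2\pi d_1(\phi(f),\phi(g))$; in particular $\phi$ is an isometry up to the constant factor $2\pi$, so it carries dense subsets to dense subsets, and a subset $S$ is dense in a subset $T$ if and only if $\phi(S)$ is dense in $\phi(T)$. Throughout, write $\tilde{q}$ for the unique point of $[0,1)$ with $p(\tilde{q})=q$.

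First I would identify the image of $C_m^q(S^1,S^1)$ under $\phi$. If $f\in C_m^q(S^1,S^1)$, then $p(\tilde{f_{\alpha}}(0))=f\circ\alpha(0)=f((1,0))=q$, and since $\tilde{f_{\alpha}}(0)\in[0,1)$ this forces $\tilde{f_{\alpha}}(0)=\tilde{q}$. The winding number condition $W(f)=\tilde{f_{\alpha}}(1)-\tilde{f_{\alpha}}(0)=m$ then gives $\tilde{f_{\alpha}}(1)=\tilde{q}+m$, so $\phi(f)\in C_{\tilde{q},\,\tilde{q}+m}(I,\mathbb{R})$. Conversely, any $g\in C_{\tilde{q},\,\tilde{q}+m}(I,\mathbb{R})$ lies in $\widehat{C}(I,\mathbb{R})$ (as $g(0)=\tilde{q}\in[0,1)$ and $g(1)-g(0)=m\in\mathbb{Z}$), so by the surjectivity of $\phi$ it equals $\phi(f)$ for a unique $f$, and unwinding the conditions shows $f\in C_m^q(S^1,S^1)$. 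Hence $\phi(C_m^q(S^1,S^1))=C_{\tilde{q},\,\tilde{q}+m}(I,\mathbb{R})$. Directly from the definition of $PL_m^q(S^1,S^1)$, one likewise reads off $\phi(PL_m^q(S^1,S^1))=PL_{\tilde{q},\,\tilde{q}+m}(I,\mathbb{R})$.

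With the two images identified, the conclusion follows from the fact noted earlier that $PL_{a,b}(I,\mathbb{R})$ is dense in $C_{a,b}(I,\mathbb{R})$ in the $\sup$ metric, applied with $a=\tilde{q}$ and $b=\tilde{q}+m$. Since $\phi$ is a homeomorphism that merely scales the metric by $2\pi$, the density of $PL_{\tilde{q},\,\tilde{q}+m}(I,\mathbb{R})$ in $C_{\tilde{q},\,\tilde{q}+m}(I,\mathbb{R})$ pulls back to the density of $PL_m^q(S^1,S^1)$ in $C_m^q(S^1,S^1)$, which is the claim.

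The only point requiring genuine care is the bijective matching of the two subsets under $\phi$: one must verify both that every $f$ with base point $q$ and winding number $m$ produces a lift with the prescribed endpoints $\tilde{q}$ and $\tilde{q}+m$, and, using the surjectivity half of the preceding proposition, that every such lift actually arises from an element of $C_m^q(S^1,S^1)$. Once this correspondence is in place the density statement is immediate, so I expect no substantive obstacle beyond this bookkeeping.
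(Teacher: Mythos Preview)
Your proposal is correct and follows essentially the same approach as the paper: both arguments identify $\phi(C_m^q(S^1,S^1))=C_{\tilde q,\tilde q+m}(I,\mathbb{R})$ and $\phi(PL_m^q(S^1,S^1))=PL_{\tilde q,\tilde q+m}(I,\mathbb{R})$, and then pull back the known density of the PL functions via the homeomorphism $\phi$. Your write-up simply spells out in more detail the verification of these two set identities that the paper records as ``clearly''.
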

\begin{proof}
If $\tilde{f_{\alpha}}(0)=\tilde{q}$, then clearly $\phi^{-1}(C_{\tilde{q},\tilde{q}+m}(I,\mathbb{R}))=C_m^q(S^1,S^1)$ and \\ $\phi^{-1}(PL_{\tilde{q},\tilde{q}+m}(I,\mathbb{R}))=PL_m^q(S^1,S^1)$.\\
So, since $PL_{\tilde{q},\tilde{q}+m}(I,\mathbb{R})$ is dense in $C_{\tilde{q},\tilde{q}+m}(I,\mathbb{R})$ and $\phi $ is a homeomorphism, $PL_m^q(S^1,S^1)$ is dense in $C_m^q(S^1,S^1)$.
\end{proof}
\begin{thm}The set  $ P_m^q(S^1,S^1)$ is dense in $C_m^q(S^1,S^1)$.
\end{thm}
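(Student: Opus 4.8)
The plan is to mirror the proof of Theorem \ref{rb} almost verbatim, replacing the use of the PL-density fact by the polynomial density result of Theorem \ref{marker}. The entire argument rests on transporting everything through the homeomorphism $\phi$ of the Proposition, which identifies $C(S^1,S^1)$ with $\widehat{C}(I,\mathbb{R})$ and satisfies $d_0(f,g)=2\pi d_1(\phi(f),\phi(g))$.

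First I would fix the lift $\tilde{q}\in[0,1)$ of $q$, i.e. the unique point with $p(\tilde{q})=q$. For any $f\in C_m^q(S^1,S^1)$ the normalization built into $\widehat{C}(I,\mathbb{R})$ forces $\tilde{f_{\alpha}}(0)\in[0,1)$, while $p\circ\tilde{f_{\alpha}}(0)=f\circ\alpha(0)=f((1,0))=q$; hence $\tilde{f_{\alpha}}(0)=\tilde{q}$, and since $W(f)=m$ we obtain $\tilde{f_{\alpha}}(1)=\tilde{q}+m$. This says precisely that $\phi(f)\in C_{\tilde{q},\tilde{q}+m}(I,\mathbb{R})$. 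Conversely every element of $C_{\tilde{q},\tilde{q}+m}(I,\mathbb{R})$ lies in $\widehat{C}(I,\mathbb{R})$, since it has value $\tilde{q}\in[0,1)$ at $0$ and $f(1)-f(0)=m\in\mathbb{Z}$, so it pulls back under $\phi^{-1}$ to a genuine circle map. Thus $\phi$ restricts to a bijection $\phi^{-1}(C_{\tilde{q},\tilde{q}+m}(I,\mathbb{R}))=C_m^q(S^1,S^1)$, and by the very definition of $P_m^q(S^1,S^1)$ the same $\phi$ also gives $\phi^{-1}(P_{\tilde{q},\tilde{q}+m}(I,\mathbb{R}))=P_m^q(S^1,S^1)$.

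With these identifications the result is immediate. Theorem \ref{marker} asserts that $P_{\tilde{q},\tilde{q}+m}(I,\mathbb{R})$ is dense in $C_{\tilde{q},\tilde{q}+m}(I,\mathbb{R})$ in the $\sup$ metric, and because $\phi$ is a homeomorphism it carries dense subsets to dense subsets. Applying $\phi^{-1}$ therefore yields that $P_m^q(S^1,S^1)$ is dense in $C_m^q(S^1,S^1)$.

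I do not expect any serious obstacle here; the only point that needs care is the bookkeeping of the base-point normalization, namely verifying that $\tilde{f_{\alpha}}(0)=\tilde{q}$ and that polynomials in $P_{\tilde{q},\tilde{q}+m}(I,\mathbb{R})$ genuinely lie in $\widehat{C}(I,\mathbb{R})$, so that their $\phi$-preimages are well-defined continuous circle maps. Once that compatibility is confirmed, the density transfers formally through the homeomorphism exactly as in Theorem \ref{rb}.
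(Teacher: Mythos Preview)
Your proposal is correct and follows essentially the same approach as the paper, which simply states that the proof is similar to that of Theorem~\ref{rb}. You have in fact supplied more detail than the paper does, spelling out the base-point normalization $\tilde{f_{\alpha}}(0)=\tilde{q}$ and the use of Theorem~\ref{marker} in place of the PL density fact, which is exactly the intended substitution.
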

\begin{proof}
Proof is similar to the proof of Theorem\ref{rb}.
\end{proof}
\section{Stone-Weierstrass type theorem for $C(S^1,S^1)$}
Let X be a compact metric  space. Fix $u,v\in X$ with $u\ne v$, $a,b \in \mathbb{R}$ and $\mathcal{A}\subset C(X,\mathbb{R})$. Define
\begin{center}
$C_{u,v}^{a,b}(X,\mathbb{R})=\{f\in C(X,\mathbb{R})|f(u)=a,\; f(v)=b\}$\\
$\mathcal{A}_{u,v}^{a,b}(X,\mathbb{R})=\{f\in\mathcal{A}|f(u)=a,\; f(v)=b\}.$
\end{center}
\begin{lem}
If $\mathcal{A}$ is a unital sub-algebra of $C(X,\mathbb{R})$ which separates points and $a$, $b$ are any two distinct  real numbers then  $\mathcal{A}_{u,v}^{a,b}(X,\mathbb{R})$ is dense in $C_{u,v}^{a,b}(X,\mathbb{R})$.
\end{lem}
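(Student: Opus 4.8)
The plan is to reduce to the unconstrained Stone--Weierstrass theorem (Theorem \ref{thm:swcreal}) and then \emph{repair} the two interpolation conditions $f(u)=a$, $f(v)=b$ by adding a correction term that is built from elements of $\mathcal{A}$ and is automatically small. So I would fix $f\in C_{u,v}^{a,b}(X,\mathbb{R})$ and $\varepsilon>0$ and aim to produce $g\in\mathcal{A}_{u,v}^{a,b}(X,\mathbb{R})$ with $\|f-g\|_\infty<\varepsilon$.

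First I would manufacture two convenient auxiliary functions living in $\mathcal{A}$, using exactly the two hypotheses on $\mathcal{A}$. Since $\mathcal{A}$ separates points, I can pick $\phi\in\mathcal{A}$ with $\phi(u)\ne\phi(v)$, and since $\mathcal{A}$ is unital it contains the constants, so the normalised functions
$$e_u=\frac{\phi-\phi(v)\,1}{\phi(u)-\phi(v)}\in\mathcal{A},\qquad e_v=1-e_u\in\mathcal{A}$$
satisfy $e_u(u)=1$, $e_u(v)=0$, $e_v(u)=0$, $e_v(v)=1$. I would record the constant $M=\max\{\|e_u\|_\infty,\|e_v\|_\infty\}$, which depends only on $\mathcal{A}$, $u$, $v$ and not on $f$.

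Next, by Theorem \ref{thm:swcreal} the algebra $\mathcal{A}$ is dense in $C(X,\mathbb{R})$, so I would choose $h\in\mathcal{A}$ with $\|f-h\|_\infty<\delta$, where $\delta=\varepsilon/(1+2M)$. In general $h$ will not respect the constraints at $u$ and $v$, so I would pass to the corrected approximant
$$g=h+(a-h(u))\,e_u+(b-h(v))\,e_v\in\mathcal{A}.$$
Evaluating at $u$ and $v$ and using the four values of $e_u,e_v$ shows $g(u)=a$ and $g(v)=b$, so $g\in\mathcal{A}_{u,v}^{a,b}(X,\mathbb{R})$.

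The point I expect to be the only real obstacle — really an observation rather than a difficulty — is controlling the size of the correction. Because $f$ already meets the boundary conditions, the two correction coefficients are small: $|a-h(u)|=|f(u)-h(u)|\le\|f-h\|_\infty<\delta$ and likewise $|b-h(v)|<\delta$. Hence $\|f-g\|_\infty\le\|f-h\|_\infty+|a-h(u)|\,\|e_u\|_\infty+|b-h(v)|\,\|e_v\|_\infty<\delta(1+2M)=\varepsilon$, which would finish the argument since $f$ and $\varepsilon$ are arbitrary. I note in passing that the hypothesis $a\ne b$ is not actually used in this scheme; only the separation and unitality of $\mathcal{A}$ are needed to build $e_u,e_v$.
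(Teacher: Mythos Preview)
Your argument is correct and in fact cleaner than the paper's. The paper proceeds differently: it takes any sequence $f_n\in\mathcal{A}$ converging uniformly to $f$ (from Stone--Weierstrass), notes that $f_n(v)-f_n(u)\to b-a\ne 0$ so one may assume $f_n(v)\ne f_n(u)$ for all $n$, and then replaces $f_n$ by the affine renormalisation
\[
\tilde f_n(x)=a+\frac{(b-a)\bigl(f_n(x)-f_n(u)\bigr)}{f_n(v)-f_n(u)},
\]
which lies in $\mathcal{A}$, hits the two prescribed values exactly, and still converges uniformly to $f$. Your scheme instead builds once and for all the Lagrange-type functions $e_u,e_v\in\mathcal{A}$ and corrects additively. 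The affine trick genuinely requires $a\ne b$ (when $a=b$ the formula collapses to the constant $a$), which is why the paper handles the case $a=b$ by a separate decomposition in the subsequent theorem; your additive correction, as you point out, covers both cases at once and so subsumes that extra step.
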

\begin{proof}
Since  $\mathcal{A}$ is a unital sub-algebra of $C(X,\mathbb{R})$ which separates points, by Stone-Weierstrass theorem  \cite{3} $\mathcal{A}$ is dense in $C(X,\mathbb{R})$.
So, for each $f\in C(X,\mathbb{R})$ there exists a sequence $f_n$ in  $\mathcal{A}$ such that $f_n\to f$ uniformly. Since $a\ne b$ we can choose $f_n$ in such a way that  $f_n(v)-f_n(u)\ne 0, \forall n$.\\
Now define
\begin{center}
$\tilde{f_n}=a+\frac{(b-a)(f_n(x)-f_n(u))}{f_n(v)-f_n(u)}$
\end{center}
Clearly $\tilde{f_n}(u)=a$ and $\tilde{f_n}(v)=b$, $\forall n$ and $\tilde{f_n}\to f$ uniformly.
\end{proof}
\begin{thm}\label{raj102}
If $\mathcal{A}$ is a unital sub-algebra of $C(X,\mathbb{R})$ which separates points and $a$, $b$ are any two   real number then  $\mathcal{A}_{u,v}^{a,b}(X,\mathbb{R})$ is dense in $C_{u,v}^{a,b}(X,\mathbb{R})$.
\end{thm}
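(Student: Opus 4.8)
The only way this statement strengthens the preceding Lemma is that it now permits $a=b$; when $a\neq b$ the conclusion is exactly the preceding Lemma, so the substantive case to handle is $a=b$ (although the construction I have in mind in fact works uniformly for all $a,b$). The plan is to begin, as in the Lemma, from a sequence $\{f_n\}\subset\mathcal{A}$ with $f_n\to f$ uniformly, supplied by the Stone--Weierstrass theorem (Theorem~\ref{thm:swcreal}) applied to the dense unital separating algebra $\mathcal{A}$, and then to perturb each $f_n$ by a small element of $\mathcal{A}$ that forces the exact values $a$ at $u$ and $b$ at $v$.

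The obstruction to reusing the Lemma verbatim is that its correction $a+\frac{(b-a)(f_n-f_n(u))}{f_n(v)-f_n(u)}$ is a single affine rescaling: it can move one endpoint value to a prescribed target, but when $a=b$ the factor $b-a$ annihilates it and it cannot independently pin both $f_n(u)$ and $f_n(v)$ to the common value $a$. I would therefore replace this one-parameter rescaling by an additive two-parameter correction. Concretely, since $\mathcal{A}$ separates points there is $g\in\mathcal{A}$ with $g(u)\neq g(v)$, and because $\mathcal{A}$ is unital the affine-in-$g$ functions
\[
\ell_u=\frac{g-g(v)}{g(u)-g(v)},\qquad \ell_v=\frac{g-g(u)}{g(v)-g(u)}
\]
again lie in $\mathcal{A}$; they satisfy $\ell_u(u)=\ell_v(v)=1$ and $\ell_u(v)=\ell_v(u)=0$, i.e.\ they form a Lagrange basis for interpolation at the two points $u,v$.

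Writing $\varepsilon_n=a-f_n(u)$ and $\delta_n=b-f_n(v)$, both of which tend to $0$ since $f_n\to f$ uniformly with $f(u)=a$ and $f(v)=b$, I would set $c_n=\varepsilon_n\,\ell_u+\delta_n\,\ell_v\in\mathcal{A}$ and define $\tilde f_n=f_n+c_n\in\mathcal{A}$. The interpolation property gives $\tilde f_n(u)=f_n(u)+\varepsilon_n=a$ and $\tilde f_n(v)=f_n(v)+\delta_n=b$, so $\tilde f_n\in\mathcal{A}_{u,v}^{a,b}(X,\mathbb{R})$. Finally, since $X$ is compact the continuous functions $\ell_u,\ell_v$ are bounded, whence $\sup_{x\in X}|c_n(x)|\le|\varepsilon_n|\sup_X|\ell_u|+|\delta_n|\sup_X|\ell_v|\to 0$; combined with $f_n\to f$ this yields $\tilde f_n\to f$ uniformly, proving the density. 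The only points requiring care are the verification that the correction genuinely lies in $\mathcal{A}$, which is precisely where unitality is used (to supply the constant terms in $\ell_u,\ell_v$), and that its supremum norm vanishes, which is where compactness of $X$ enters.
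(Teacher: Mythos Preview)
Your argument is correct and in fact handles all pairs $(a,b)$ at once, with no case split. The paper proceeds differently in the equal-value case $a=b$: rather than correcting $f_n$ directly, it splits the target $f$ as $f_1+f_2$ where $f_1=\tfrac12 f+g$ and $f_2=\tfrac12 f-g$ for a Urysohn function $g$ with $g(u)=-a/4$, $g(v)=a/4$; each summand then has \emph{distinct} prescribed values at $u,v$, so the preceding Lemma applies to each, and the two approximating sequences are added back together (with a further reduction for $a=0$). Your two-parameter Lagrange correction $\tilde f_n=f_n+(a-f_n(u))\ell_u+(b-f_n(v))\ell_v$ is more direct: it avoids Urysohn's Lemma entirely (the separating function $g$ is drawn from $\mathcal{A}$ itself), eliminates the case analysis on $a=b$ and on $a=0$, and makes transparent why unitality and compactness are needed. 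It also generalizes immediately to finitely many interpolation nodes, which is exactly the content of the subsequent Theorem~\ref{thm:swcreal} quoted from \cite{1}. The paper's decomposition, by contrast, buys a reduction to the already-proved Lemma at the cost of an auxiliary construction and several sub-cases.
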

\begin{proof}
When $a\ne b$ the result is true by above lemma. So let $f\in C_{u,v}^{a,a}(X,\mathbb{R})$. First let $a\ne 0$. By Urysohn Lemma there exists  $g \in C(X,\mathbb{R})$ such that $g(u)=-\frac{a}{4}$ and  $g(v)=\frac{a}{4}$.
Now define $f_1$ and $f_2$ by $f_1=\frac{1}{2}f+g$ and $f_2=\frac{1}{2}f-g$. Clearly $f_1\in C_{u,v}^{\frac{1}{4}a,\frac{3}{4}a}(X,\mathbb{R})$ and $f_2\in C_{u,v}^{\frac{3}{4}a,\frac{1}{4}a}(X,\mathbb{R})$ and $f_1+f_2=f$. So by above lemma  there exist sequences $f_{1,n}\in \mathcal{A}_{u,v}^{\frac{1}{4}a,\frac{3}{4}a}(X,\mathbb{R})$ and $f_{2,n}\in \mathcal{A}_{u,v}^{\frac{3}{4}a,\frac{1}{4}a}(X,\mathbb{R})$ such that $f_{1,n}\to f_1$ and $f_{2,n}\to f_2$ uniformly. Now  define $f_n =f_{1,n}+f_{2,n}$. Clearly $f_n\in \mathcal{A}_{u,v}^{a,a}(X,\mathbb{R})$ and $f_n\to f$ uniformly. Again  proof for the case when $a=0$, can be given  in the same way as was given in Theorem \ref{marker}.
\end{proof}
Let $X$ be a compact housdorff space. Let  $\mathbb{F}$ denotes either the field of real numbers $\mathbb{R}$ or the
field of complex numbers $\mathbb{C}$,  $C(X,\mathbb{F})$ denotes the collection of
$\mathbb{F}$--valued continuous functions on $X$ with the sup--norm. And, let $\mathcal{A}$ denote
a subset of $C(X,\mathbb{F})$.

Let $k$ be any natural number. Let $S=\{ x_1,x_2,\cdots,x_k\}\subset X$ (with $x_i\neq x_j$ for distinct $i$ and $j$)
and let $V=\{ v_1, v_2, \cdots, v_k\}\subset \mathbb{F}$. Define
\begin{align*}
C_S^V(X,\mathbb{F}) &= \{f\in C(X,\mathbb{F}) \;| \;f(x_1) =v_1, f(x_2)=v_2, \cdots, f(x_k)=v_k \}\mathrm{\;\;and}\\
\mathcal{A}_S^V(X,\mathbb{F}) &= \{f\in \mathcal{A} \;| \;f(x_1) =v_1, f(x_2)=v_2, \cdots, f(x_k)=v_k \}.
\end{align*}


Theorem \ref{raj102} is a particular case of the following theorem from \cite{1}.
\begin{thm}[Stone-Weierstrass with finitely many interpolatory constraints]\label{thm:swcreal}
For a natural number $k$, let $S=\{ x_1,x_2,\cdots,x_k\}\subset X$ (with $x_i\neq x_j$ for distinct $i$ and $j$)
and let $V=\{ v_1, v_2, \cdots, v_k\}\subset \mathbb{R}$. If $\mathcal{A}$ is a unital sub--algebra of $C(X,\mathbb{R})$
which separates points then $\mathcal{A}_S^V(X,\mathbb{R})$ is dense in $C_S^V(X,\mathbb{R})$.
\end{thm}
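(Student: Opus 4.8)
The plan is to deduce the result from the ordinary Stone--Weierstrass theorem by manufacturing, inside $\mathcal{A}$ itself, a Lagrange-type system of functions interpolating the Kronecker data at the nodes $x_1,\dots,x_k$; these will let me adjust the values of any uniform approximant at the finitely many nodes without destroying uniform convergence.

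First I would build, for each index $i$, a function $g_i\in\mathcal{A}$ with $g_i(x_j)=\delta_{ij}$. Fixing $i$, for each $j\neq i$ the separation hypothesis furnishes $h_{ij}\in\mathcal{A}$ with $h_{ij}(x_i)\neq h_{ij}(x_j)$. Because $\mathcal{A}$ is unital it contains all constant functions, and because it is an algebra it is closed under the relevant operations, so
$$\ell_{ij}=\frac{h_{ij}-h_{ij}(x_j)}{h_{ij}(x_i)-h_{ij}(x_j)}\in\mathcal{A},\qquad \ell_{ij}(x_i)=1,\quad \ell_{ij}(x_j)=0.$$
Setting $g_i=\prod_{j\neq i}\ell_{ij}$ yields an element of $\mathcal{A}$ (a finite product of members of $\mathcal{A}$) with $g_i(x_i)=1$, while $g_i(x_j)=0$ for each $j\neq i$, since the factor $\ell_{ij}$ vanishes at $x_j$.

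Next, given $f\in C_S^V(X,\mathbb{R})$, the Stone--Weierstrass theorem supplies a sequence $p_n\in\mathcal{A}$ with $p_n\to f$ uniformly. I would then correct the values at the nodes by defining
$$\tilde p_n=p_n+\sum_{i=1}^{k}\bigl(v_i-p_n(x_i)\bigr)\,g_i.$$
Each $\tilde p_n$ lies in $\mathcal{A}$, since the coefficients $v_i-p_n(x_i)$ are scalars and $\mathcal{A}$ is closed under scalar multiples and finite sums. Evaluating at a node and using $g_i(x_j)=\delta_{ij}$ gives $\tilde p_n(x_j)=v_j$, so $\tilde p_n\in\mathcal{A}_S^V(X,\mathbb{R})$. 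Finally, since $p_n(x_i)\to f(x_i)=v_i$ for each of the finitely many $i$, the correction coefficients tend to $0$; as each $g_i$ is a fixed bounded function, the whole correction term tends to $0$ uniformly, whence $\tilde p_n\to f$ uniformly. This proves density.

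I expect the only genuine obstacle to be the construction of the $g_i$, which is exactly where both hypotheses are indispensable: separation produces the $h_{ij}$, while unitality permits subtraction of the constant $h_{ij}(x_j)$ and division by the nonzero constant $h_{ij}(x_i)-h_{ij}(x_j)$, and the product structure of the algebra assembles the $g_i$. Once these interpolating functions are available, the value-correction step is a routine generalization of the two-point computation in the proof of Theorem~\ref{raj102} and requires no new ideas.
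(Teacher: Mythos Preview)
Your argument is correct. Note, however, that the paper does not actually prove this statement: it is quoted from reference~\cite{1} without proof, with Theorem~\ref{raj102} (the two-point case) offered as a special case whose proof \emph{is} given. So there is no ``paper's own proof'' to match.

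Compared with the paper's treatment of the two-point case, your route is cleaner and genuinely more general. The paper handles $k=2$ with $a\ne b$ by the rational adjustment $\tilde f_n=a+(b-a)(f_n-f_n(u))/(f_n(v)-f_n(u))$, which requires the ad hoc step of choosing approximants with $f_n(u)\ne f_n(v)$ and then a separate splitting trick when $a=b$; this does not extend straightforwardly to $k$ nodes. Your Lagrange-type construction of interpolants $g_i\in\mathcal{A}$ with $g_i(x_j)=\delta_{ij}$ uses unitality, separation, and the algebra structure exactly once each, works uniformly for all value configurations (equal or not), and reduces the correction to a finite linear combination with scalar coefficients tending to zero. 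The only point worth making explicit is that each $g_i$ is bounded because $X$ is compact, which you implicitly use when concluding that the correction term tends to $0$ uniformly.
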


For $q\in S^1$,  $m\in\mathbb{Z}$  and a unital sub-algebra $\mathcal{A}$ of $C(I,\mathbb{R})$ which separates points, define
\begin{center}
\begin{footnotesize}
$ P_{m,q}^{\mathcal{A}}(S^1,S^1)=\{ f\in C(S^1,S^1)|\tilde{f_{\alpha}}\in \mathcal{A}_{0,1}^{\tilde{q},\tilde{q}+m}(I,\mathbb{R}),\;p\circ\tilde{f_{\alpha}}(0)=q\}.$
\end{footnotesize}
\end{center}
\begin{thm}
For $m\in \mathbb{Z}$ and  $q\in S^1 $, $P_{m,q}^{\mathcal{A}}(S^1,S^1)$ is dense in $C_m^q(S^1,S^1)$.
\end{thm}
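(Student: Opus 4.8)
The plan is to reduce the statement, via the homeomorphism $\phi$ of the Proposition in Section~\ref{DS^1}, to a single application of Theorem~\ref{raj102} on the interval $I=[0,1]$, exactly in the spirit of the proof of Theorem~\ref{rb}. Let $\tilde{q}\in[0,1)$ denote the representative of $q$ in the fibre $p^{-1}(q)$, so that $p(\tilde{q})=q$. Since $\phi(f)=\tilde{f_{\alpha}}$ is a bijection onto $\widehat{C}(I,\mathbb{R})$ satisfying $d_0(f,g)=2\pi d_1(\phi(f),\phi(g))$, it is a metric-scaling homeomorphism and hence carries dense subsets to dense subsets. Thus it suffices to identify the $\phi$-images of the two sets appearing in the statement and to invoke density on the interval.

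First I would verify the two set-theoretic identifications. For the target class: if $f\in C_m^q(S^1,S^1)$, then $p(\tilde{f_{\alpha}}(0))=f\circ\alpha(0)=f((1,0))=q$ together with the normalisation $\tilde{f_{\alpha}}(0)\in[0,1)$ forces $\tilde{f_{\alpha}}(0)=\tilde{q}$, while $W(f)=\tilde{f_{\alpha}}(1)-\tilde{f_{\alpha}}(0)=m$ gives $\tilde{f_{\alpha}}(1)=\tilde{q}+m$; the converse is equally direct. This shows $\phi(C_m^q(S^1,S^1))=C_{0,1}^{\tilde{q},\tilde{q}+m}(I,\mathbb{R})$. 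For the approximating class, the defining condition of $P_{m,q}^{\mathcal{A}}(S^1,S^1)$ is precisely $\tilde{f_{\alpha}}\in\mathcal{A}_{0,1}^{\tilde{q},\tilde{q}+m}(I,\mathbb{R})$, the side condition $p\circ\tilde{f_{\alpha}}(0)=q$ being automatic once $\tilde{f_{\alpha}}(0)=\tilde{q}$, so $\phi(P_{m,q}^{\mathcal{A}}(S^1,S^1))=\mathcal{A}_{0,1}^{\tilde{q},\tilde{q}+m}(I,\mathbb{R})$. I would also record that both target sets genuinely lie inside $\widehat{C}(I,\mathbb{R})$: their endpoint data satisfy $f(0)=\tilde{q}\in[0,1)$ and $f(1)-f(0)=m\in\mathbb{Z}$, so $\phi$ restricts to a homeomorphism between the two pairs of sets.

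With these identifications in hand, I would apply Theorem~\ref{raj102} with $X=I$, $u=0$, $v=1$, $a=\tilde{q}$ and $b=\tilde{q}+m$: since $\mathcal{A}$ is by hypothesis a unital sub-algebra of $C(I,\mathbb{R})$ separating points, the theorem yields that $\mathcal{A}_{0,1}^{\tilde{q},\tilde{q}+m}(I,\mathbb{R})$ is dense in $C_{0,1}^{\tilde{q},\tilde{q}+m}(I,\mathbb{R})$. Transporting this back along the metric-scaling homeomorphism $\phi^{-1}$ then gives the density of $P_{m,q}^{\mathcal{A}}(S^1,S^1)$ in $C_m^q(S^1,S^1)$.

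I do not anticipate a genuine obstacle here; the proof is a formal transfer along $\phi$ followed by a direct citation of Theorem~\ref{raj102}. The only point requiring care is the base-point bookkeeping, namely confirming that $\tilde{f_{\alpha}}(0)=\tilde{q}$ is forced by $f((1,0))=q$ together with $\tilde{f_{\alpha}}(0)\in[0,1)$, and checking that the interpolation data $(\tilde{q},\tilde{q}+m)$ keep the relevant function classes inside $\widehat{C}(I,\mathbb{R})$ so that the restriction of $\phi$ is still a homeomorphism onto its image. Once this is settled, density transfers verbatim.
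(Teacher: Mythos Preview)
Your proposal is correct and follows exactly the approach the paper intends: the paper's own proof reads in its entirety ``Proof is similar to the proof of Theorem~\ref{rb},'' and your argument is precisely that transfer along $\phi$ together with an appeal to Theorem~\ref{raj102} in place of the PL density used in Theorem~\ref{rb}. You have in fact supplied more detail on the base-point bookkeeping than the paper does.
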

\begin{proof}
Proof is similar to the proof of Theorem \ref{rb}.
\end{proof}

		

\end{document}